\newtheorem{definition}{Definition}
\newtheorem{lemma}[definition]{Lemma}
\newtheorem{theorem}[definition]{Theorem}
\newtheorem{proposition}[definition]{Proposition}
\newtheorem{remark}[definition]{Remark}
\newenvironment{proof}{\noindent{\bf Proof }}{\hfill $\Box$\medskip}
\begin{document}

\title{On non-formality of a simply-connected symplectic $8$-manifold}

\classification{MSC200 numbers: 57R17; 55S30; 55P62.}
\keywords{Symplectic manifolds, formality, orbifolds, Massey
products.}

\author{Gil R. Cavalcanti}{
  address={Mathematical Institute, St. Giles
24 -- 29, Oxford OX1 3LB, United Kingdom} }

\author{Marisa Fern\'andez}{
  address={Departamento de Matem\'aticas,
Facultad de Ciencia y Tecnolog\'{\i}a, Universidad del Pa\'{\i}s
Vasco, Apartado 644, 48080 Bilbao, Spain} }

\author{Vicente Mu\~noz}{
  address={Instituto de Ciencias Matem\'aticas CSIC-UAM-UCM-UC3M, Consejo
Superior de Investigaciones Cient{\'\i}ficas, C/ Serrano 113bis, 28006
Madrid, Spain}
  ,altaddress={Facultad de Matem{\'a}ticas, Universidad Complutense de Madrid, Plaza
de Ciencias 3, 28040 Madrid, Spain} 
}

\begin{abstract}
We show an alternative construction of the first example of a
simply-connected compact symplectic non-formal $8$-manifold given
in \cite{FM4}. We also give an alternative proof of its
non-formality using higher order Massey products.
\end{abstract}

\maketitle

\section{Introduction}

In \cite{ref1,ref2,ref3} Babenko--Taimanov and Rudyak--Tralle give
examples of non-formal simply-connected compact symplectic
manifolds of any even dimension bigger than or equal to $10$.
Babenko and Taimanov raise the question of the existence of
non-formal simply-connected compact symplectic manifolds of
dimension $8$, which cannot be constructed with their methods. In
\cite{FM4}, it is constructed the first example of a
simply-connected compact symplectic $8$-dimensional manifold which
is non-formal, thereby completing the solution to the question of
existence of non-formal symplectic manifolds for all allowable
dimensions. This example is constructed by starting with a
suitable complex $8$-dimensional compact nilmanifold $M$ which has
a symplectic form (but is not K\"ahler). Then one quotients by a
suitable action of the finite group ${\mathbb{Z}}_3$ acting
symplectically and freely except at finitely many fixed points.
This gives a symplectic orbifold $\widehat M=M/{\mathbb{Z}}_3$,
which is non-formal and simply-connected thanks to the choice of
${\mathbb{Z}}_3$-action. The last step is a process of symplectic
resolution of singularities to get a smooth symplectic manifold.
The symplectic resolution of isolated orbifold singularities has
been described in detail in \cite{CMF}. The non-formality of
$\widehat M$ is checked via a newly defined product in cohomology.
This is a product of Massey type, which is called $a$-product, and
it is discussed at length in \cite{CMF}.

The purpose of the present note is to give a new description of
the symplectic orbifold $\widehat M$ defined in \cite{FM4}. The
description presented here is in terms of real nilpotent Lie
groups. Secondly, we prove the non-formality of $\widehat M$ by
using higher order Massey products instead of $a$-products. It
remains thus open the question of the existence of a smooth
$8$-manifold with non-zero $a$-products but trivial (higher order)
Massey products.

\section{A nilmanifold of dimension $6$} \label{sec:preliminaries}

Let $G$ be the simply connected nilpotent Lie group of dimension
$6$ defined by the structure equations
 \begin{equation}\label{eqn:a}
 \begin{array}{lll}
  && d\beta_i=0, \qquad i=1,2 \\
  && d\gamma_i=0,\qquad i=1,2 \\
  && d\eta_1=-\beta_1\wedge \gamma_1+ \beta_2\wedge \gamma_1+
     \beta_1\wedge \gamma_2+ 2  \beta_2\wedge \gamma_2, \quad \\
  && d\eta_2=2 \beta_1\wedge \gamma_1+ \beta_2\wedge \gamma_1+
     \beta_1\wedge \gamma_2 - \beta_2\wedge \gamma_2,
 \end{array}
 \end{equation}
where $\{\beta_i,\gamma_i,\eta_i; 1\leq i \leq 2\}$ is a basis of
the left invariant $1$--forms on $G$. Because the structure
constants are rational numbers, Mal'cev theorem \cite{Malc}
implies the existence of a discrete subgroup $\Gamma$ of $G$ such
that the quotient space $N=\Gamma{\backslash} G$ is compact.

Using Nomizu's theorem \cite{No} we can compute the real
cohomology of $N$. We get
\begin{eqnarray*}
 H^0(N)& =& \langle 1\rangle,\\
 H^1(N) &=& \langle [\beta_1], [\beta_2],[\gamma_1],[\gamma_2]\rangle,\\
 H^2(N) &=& \langle [\beta_1 \wedge \beta_2], [\beta_1 \wedge
 \gamma_1],
  [\beta_1 \wedge \gamma_2], [\gamma_1\wedge \gamma_2],
 [\beta_1\wedge \eta_2 - \beta_2\wedge \eta_1],
  [\gamma_1\wedge \eta_2 - \gamma_2\wedge \eta_1],\\
  &&
 [\beta_1\wedge \eta_1 + \beta_1\wedge \eta_2+\beta_2\wedge \eta_2],
  [\gamma_1\wedge \eta_1 + \gamma_1\wedge \eta_2+\gamma_2\wedge \eta_2]\rangle,\\
  H^3(N) &=& \langle [\beta_1\wedge\beta_2\wedge \eta_1],
 [\beta_1\wedge\beta_2\wedge \eta_2],
 [\gamma_1\wedge\gamma_2\wedge \eta_1],
 [\gamma_1\wedge\gamma_2\wedge \eta_2],
 [\beta_1\wedge\gamma_1\wedge (\eta_1+2\eta_2)],\\
 & &
   [\beta_1 \wedge \gamma_1 \wedge \eta_2-\beta_1 \wedge \gamma_2 \wedge \eta_1],
   [\beta_1 \wedge \gamma_2 \wedge \eta_1-\beta_1 \wedge \gamma_2 \wedge \eta_2],
 [\beta_2 \wedge \gamma_2 \wedge (\eta_2+2\eta_1)],  \\
 & &
  [\beta_2 \wedge \gamma_2 \wedge \eta_1 -\beta_2 \wedge \gamma_1 \wedge \eta_2],
  [\beta_2 \wedge \gamma_1 \wedge \eta_2-\beta_2 \wedge \gamma_1 \wedge   \eta_1]\rangle,\\
 H^4(N) &=& \langle [\beta_1 \wedge \beta_2 \wedge \gamma_1 \wedge \eta_1],
   [\beta_1 \wedge \beta_2 \wedge \gamma_1 \wedge \eta_2],
  [\beta_1 \wedge \beta_2 \wedge \eta_1 \wedge \eta_2],
 [\beta_1 \wedge \gamma_1 \wedge \gamma_2 \wedge \eta_2],\\
  & &   [\beta_2 \wedge \gamma_1 \wedge \gamma_2 \wedge \eta_2],
   [\gamma_1 \wedge \gamma_2 \wedge \eta_1 \wedge \eta_2],
   [\beta_1 \wedge \gamma_2 \wedge \eta_1 \wedge \eta_2-
   \beta_2 \wedge \gamma_1 \wedge \eta_1 \wedge \eta_2],\\
  & & [\beta_1 \wedge \gamma_2 \wedge \eta_1 \wedge \eta_2+
 \beta_1 \wedge \gamma_1 \wedge \eta_1 \wedge \eta_2+
 \beta_2 \wedge \gamma_2 \wedge \eta_1 \wedge \eta_2]\rangle,\\
 H^5(N) &=& \langle [\beta_1 \wedge \beta_2 \wedge \gamma_1 \wedge
  \eta_1 \wedge \eta_2],
 [\beta_1 \wedge \beta_2 \wedge \gamma_2 \wedge
  \eta_1 \wedge \eta_2],
  [\beta_1 \wedge \gamma_1 \wedge \gamma_2 \wedge
  \eta_1 \wedge \eta_2],\\
 & &
  [\beta_2 \wedge \gamma_1 \wedge \gamma_2 \wedge
  \eta_1 \wedge \eta_2] \rangle, \\
  H^6(N) &=& \langle [\beta_1 \wedge \beta_2 \wedge \gamma_1 \wedge \gamma_2
  \wedge \eta_1 \wedge \eta_2]\rangle.
 \end{eqnarray*}

We can give a more explicit description of the group $G$. As a
differentiable manifold $G={\mathbb{R}}^6$. The nilpotent Lie
group structure of $G$ is given by the multiplication law
 \begin{equation}\label{eqn:m}
 \begin{array}{ccl}
  m:\qquad G \times G&\longrightarrow& \qquad G \\
  \left((y_1',y_2',z_1',z_2',v_1',v_2'),(y_1,y_2,z_1,z_2,v_1,v_2)\right)
  &\mapsto & \Big(y_1+y_1',y_2+y_2',z_1+z_1',z_2+z_2', \\ &&
  v_1 +v_1'+(y_1' -y_2') z_1 - (y_1'+2y_2') z_2,\\ &&
  v_2 +v_2'- (2y_1' +y_2') z_1 +(y_2'-y_1') z_2 \Big).
 \end{array}
 \end{equation}

We also need a discrete subgroup, which it could be taken to be
${\mathbb{Z}}^6 \subset G$. However, for later convenience, we
shall take the subgroup
 $$
 \Gamma = \{(y_1,y_2,z_1,z_2,v_1,v_2)\in {\mathbb{Z}}^6 \, |\, v_1 \equiv v_2
 \pmod 3 \} \subset G,
 $$
and define the nilmanifold
 $$
 N=\Gamma \backslash G\ .
 $$
In terms of a (global) system of coordinates
$(y_1,y_2,z_1,z_2,v_1,v_2)$ for $G$, the $1$--forms $\beta_i$,
$\gamma_i$ and $\eta_i$, $1\leq i \leq 2$, are  given by
 \begin{eqnarray*}
  \beta_i&=&dy_i, \quad  1\leq i \leq 2,  \quad \\
  \gamma_i &= &dz_i,  \quad  1\leq i \leq 2,  \quad \\
  \eta_1&=&dv_1-y_1 dz_1 +y_2dz_1+y_1 dz_2 +2y_2dz_2, \\
  \eta_2&=&dv_2+2y_1 dz_1 +y_2dz_1+y_1 dz_2 -y_2dz_2.
 \end{eqnarray*}
Note that $N$ is a principal torus bundle
 $$
 T^2={\mathbb{Z}}\langle (1,1),(3,0)\rangle
 \backslash {\mathbb{R}}^2 \hookrightarrow N \longrightarrow
 T^4= {\mathbb{Z}}^4\backslash {\mathbb{R}}^4,
 $$
with the projection $(y_1,y_2,z_1,z_2,v_1,v_2) \mapsto
(y_1,y_2,z_1,z_2)$.

\medskip

The Lie group $G$ can be also described as follows. Consider the
basis $\{\mu_i,\nu_i,\theta_i; 1\leq i \leq 2\}$ of the left
invariant $1$--forms on $G$ given by
 $$
 \begin{array}{ll}
 \mu_1 = \beta_1+\displaystyle\frac{1+\sqrt{3}}{2} \beta_2,  \quad &
 \mu_2 = \beta_1+ \displaystyle\frac{1-\sqrt{3}}{2} \beta_2,  \quad \\
 \nu_1 = \gamma_1+\displaystyle\frac{1+\sqrt{3}}{2} \gamma_2,  \quad &
 \nu_2 = \gamma_1+ \displaystyle\frac{1-\sqrt{3}}{2} \gamma_2,  \quad \\
 \theta_1 = \displaystyle\frac{2}{\sqrt{3}} \eta_1+
 \displaystyle\frac{1}{\sqrt{3}} \eta_2, \quad  & \theta_2=\eta_2.
 \end{array}
 $$
Hence, the structure equations can be rewritten as
 \begin{equation} \label{eqn:struc}
 \begin{array}{l}
  d\mu_i=0,  \quad  1\leq i \leq 2,  \\
  d\nu_i=0,  \quad  1\leq i \leq 2,  \\
  d\theta_1=\mu_1\wedge \nu_1- \mu_2\wedge \nu_2,  \\
  d\theta_2=\mu_1\wedge \nu_2+ \mu_2\wedge \nu_1.
 \end{array}
 \end{equation}
This means that $G$ is the complex Heisenberg group
$H_{{\mathbb{C}}}$, that is, the complex nilpotent Lie group of
complex matrices of the form
 $$
 \pmatrix{1&u_2&u_3\cr 0&1&u_1\cr 0&0&1\cr}.
 $$
In fact, in terms of the natural (complex) coordinate functions
$(u_1,u_2,u_3)$ on $H_{{\mathbb{C}}}$, we have that the complex
$1$--forms
 $$
 \mu=du_1, \ \nu=du_2, \ \theta=du_3-u_2 du_1
 $$
are left invariant and $d\mu=d\nu=0$, $d\theta=\mu\wedge\nu$. Now,
it is enough to take $\mu_1=\Re(\mu)$, $\mu_2=\Im(\mu)$,
$\nu_1=\Re(\nu)$, $\nu_2=\Im(\nu)$, $\theta_1=\Re(\theta)$,
$\theta_2=\Im(\theta)$ to recover equations (\ref{eqn:struc}),
where $\Re(\mu)$ and $\Im(\mu)$ denote the real and the imaginary
parts of $\mu$, respectively.

\begin{lemma} \label{lem:N}
  Let $\Lambda \subset {\mathbb{C}}$ be the lattice generated by $1$ and
  $\zeta=e^{2\pi i/3}$, and consider the discrete subgroup
  $\Gamma_H \subset H_{{\mathbb{C}}}$ formed
  by the matrices in which $u_1,u_2,u_3 \in \Lambda$. Then
  there is a natural identification of
  $N=\Gamma{\backslash}G$ with the quotient $\Gamma_H\backslash
  H_{{\mathbb{C}}}$.
\end{lemma}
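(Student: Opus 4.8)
The plan is to produce an explicit isomorphism of Lie groups $\Phi\colon G\to H_{\mathbb C}$ carrying $\Gamma$ onto $\Gamma_H$; the desired identification of $N=\Gamma\backslash G$ with $\Gamma_H\backslash H_{\mathbb C}$ is then just the induced map on quotients, which is a diffeomorphism because $\Phi$ is equivariant. The existence of \emph{some} isomorphism $G\cong H_{\mathbb C}$ is already contained in the excerpt: $G$ and $H_{\mathbb C}$ are simply connected nilpotent Lie groups whose Lie algebras have the common presentation (\ref{eqn:struc}), so the linear identification of coframes integrates to a unique Lie group isomorphism. The work is to pin this map down in coordinates and to track the two lattices through it.

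First I would realize $\Phi$ by integrating the left invariant complex forms. Since $d\mu=d\nu=0$ and $G$ is simply connected, there are global functions $u_1,u_2$ with $du_1=\mu$, $du_2=\nu$ vanishing at $e$, and $u_3$ is the primitive of $\theta+u_2\,du_1$. Substituting $\beta_i=dy_i$, $\gamma_i=dz_i$ and the definitions of $\mu_i,\nu_i$, and collecting real and imaginary parts, one gets
\[
 u_1=c\,(y_1-\zeta y_2),\qquad u_2=c\,(z_1-\zeta z_2),\qquad c=1+i,
\]
while the central coordinates enter $u_3$ through $\theta$ as $\frac{2}{\sqrt3}\,v_1+\left(\frac{1}{\sqrt3}+i\right)v_2$ plus terms quadratic in $y,z$. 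To normalize the scale I would compose with the dilation automorphism $(u_1,u_2,u_3)\mapsto(\lambda u_1,\lambda u_2,\lambda^2 u_3)$ of $H_{\mathbb C}$ with $\lambda=c^{-1}$, which turns $u_1,u_2$ into $y_1-\zeta y_2$, $z_1-\zeta z_2$ and multiplies $u_3$ by $c^{-2}=-\frac{i}{2}$.

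With $\Phi$ in hand I would check $\Phi(\Gamma)=\Gamma_H$ on generators. The lattice $\Gamma$ is generated by the four horizontal elements corresponding to $y_1,y_2,z_1,z_2$ together with the two central elements $(0,0,0,0,1,1)$ and $(0,0,0,0,3,0)$ spanning $\{v_1\equiv v_2\bmod 3\}$. Because $\{1,-\zeta\}$ is a $\mathbb Z$-basis of the Eisenstein ring $\Lambda=\mathbb Z[\zeta]$, the horizontal generators map to matrices with $(u_1,u_2)\in\{(1,0),(-\zeta,0),(0,1),(0,-\zeta)\}$ and $u_3=0$, and together with the commutator relation $[(0,u_2,0),(u_1,0,0)]=(0,0,u_1u_2)$ (and closure of $\Lambda$ under multiplication) they generate $\Gamma_H$ modulo its centre. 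The two central generators map to $u_3=-\zeta$ and $u_3=-1-2\zeta$, which generate $\Lambda$, and a direct computation gives
\[
 \frac12 v_2-\frac{i}{2\sqrt3}(2v_1+v_2)\in\Lambda
 \iff 2v_1+v_2\equiv0\pmod 3
 \iff v_1\equiv v_2\pmod 3,
\]
so the centre of $\Gamma$ maps isomorphically onto the centre $\{u_3\in\Lambda\}$ of $\Gamma_H$. Hence $\Phi(\Gamma)=\Gamma_H$ and $\Phi$ descends to the asserted diffeomorphism.

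The main obstacle is exactly this last congruence: showing that the central coordinate $u_3$ lands in $\Lambda$ precisely when $v_1\equiv v_2\pmod 3$, and checking that the quadratic $y,z$-corrections to $u_3$ remain in $\Lambda$ at lattice points. This is where the primitive cube root $\zeta=e^{2\pi i/3}$ (the source of the $\sqrt3$'s in the change of coframe) forces the modulus $3$, and it explains why $\Gamma$ was defined with the condition $v_1\equiv v_2\pmod 3$ rather than as $\mathbb Z^6$.
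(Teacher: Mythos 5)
Your proposal is correct and follows essentially the same route as the paper: both integrate the complex coframe $(\mu,\nu,\theta)$ to an explicit isomorphism $G\to H_{\mathbb C}$, then compose with the dilation $(u_1,u_2,u_3)\mapsto(u_1/(1+i),u_2/(1+i),u_3/(1+i)^2)$ and track the lattice, with the congruence $v_1\equiv v_2\pmod 3$ emerging exactly as you describe. Your verification on generators (including the commutator argument for the quadratic corrections) is somewhat more explicit than the paper's, but it is the same proof.
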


\begin{proof}
  We have constructed above an isomorphism of Lie groups $G\to H_{{\mathbb{C}}}$, whose
  explicit equations are
   $$
   (y_1,y_2,z_1,z_2,v_1,v_2) \mapsto (u_1,u_2,u_3),
   $$
where
 \begin{eqnarray*}
 u_1 &=& \left( y_1+\displaystyle\frac{1+\sqrt{3}}{2} y_2  \right) +
 i \left( y_1+ \displaystyle\frac{1-\sqrt{3}}{2} y_2 \right), \\
 u_2 &=& \left( z_1+\displaystyle\frac{1+\sqrt{3}}{2} z_2  \right) +
 i \left( z_1+ \displaystyle\frac{1-\sqrt{3}}{2} z_2 \right), \\
 u_3 &=& \frac{1}{\sqrt{3}}\left( 2v_1+v_2 +3z_1y_2 +3z_2y_1+3z_2y_2
 \right) + i \left(v_2 + 2z_1y_1+z_2y_1 + z_1y_2 -z_2y_2\right).
 \end{eqnarray*}
Note that the formula for $u_3$ can be deduced from
 $$
 du_3-u_2du_1 =\theta =\left( \displaystyle\frac{2}{\sqrt{3}}\eta_1 +
 \displaystyle\frac{1}{\sqrt{3}}\eta_2 \right) + i\eta_2 \ .
 $$
Now the group $\Gamma\subset G$ corresponds under this isomorphism
to
 $$
 \left\{(u_1,u_2,u_3) | u_1, u_2 \in {\mathbb{Z}}\left\langle 1+i,
  \frac{1+\sqrt{3}}{2}+ \frac{1-\sqrt{3}}{2} i \right\rangle
  , u_3 \in {\mathbb{Z}}\left\langle
  2\sqrt{3}, \sqrt{3} + i \right\rangle \right\}.
  $$
Using the isomorphism of Lie groups  $H_{\mathbb{C}} \to
H_{\mathbb{C}}$ given by
 $$
 (u_1,u_2,u_3) \mapsto (u_1',u_2',u_3')=\left( \frac{u_1}{1+i},\frac{u_2}{1+i}
  ,\frac{u_3}{(1+i)^2}\right),
  $$
we get that $u_1',u_2', u_3'\in \Lambda={\mathbb{Z}}\langle
1,\zeta\rangle$, which completes the proof.
\end{proof}

\begin{remark}
 If we had considered the discrete subgroup ${\mathbb{Z}}^6\subset G$ instead of
 $\Gamma\subset G$, then we would not have obtained the fact $u_3'\in
 \Lambda$ in the proof of Lemma \ref{lem:N}.
Note that $N=\Gamma\backslash G \twoheadrightarrow
{\mathbb{Z}}^6\backslash G$ is a $3:1$ covering.
\end{remark}

Under the identification $N=\Gamma{\backslash}G \cong
\Gamma_H\backslash H_{{\mathbb{C}}}$, $N$ becomes the principal
torus bundle
 $$
 T^2={\Lambda \backslash {\mathbb{C}}} \hookrightarrow N \longrightarrow
 T^4= {\Lambda^2 \backslash {\mathbb{C}}^2},
 $$
with the projection $(u_1,u_2,u_3)\mapsto (u_1,u_2)$.

\section{A symplectic orbifold of dimension $8$} \label{sec:hatE}

We define the $8$--dimensional compact nilmanifold $M$ as  the
product
 $$
  M=T^2 \times N.
 $$
By Lemma \ref{lem:N} there is an isomorphism between $M$ and the
manifold $(\Gamma_H\backslash H_{{\mathbb{C}}}) \times (\Lambda
\backslash {\mathbb{C}})$ studied in \cite[Section 2]{FM4} (we
have to send the factor $T^2$ of $M$ to the factor $\Lambda
\backslash {\mathbb{C}}$). Clearly, $M$ is a principal torus
bundle
 $$
 T^2 \hookrightarrow M \stackrel{\pi}{\longrightarrow} T^6.
 $$
Let $(x_1,x_2)$ be the Lie algebra coordinates for $T^2$, so that
$(x_1,x_2,y_1,y_2,z_1,z_2,v_1,v_2)$ are coordinates for the Lie
algebra ${\mathbb{R}}^2\times G$ of $M$. Then
$\pi(x_1,x_2,y_1,y_2,z_1,z_2,v_1,v_2)=(x_1,x_2,y_1,y_2,z_1,z_2)$.
A basis for the left invariant (closed) $1$--forms on $T^2$ is
given as $\{\alpha_1,\alpha_2\}$, where $\alpha_1=dx_1$ and
$\alpha_2=dx_2$. Then $\{\alpha_i,\beta_i,\gamma_i,\eta_i; 1\leq i
\leq 2\}$ constitutes a (global) basis for the left invariant
$1$--forms on $M$. Note that $\{\alpha_i,\beta_i,\gamma_i; 1\leq i
\leq 2\}$ is a basis for the left invariant closed $1$--forms on
the base $T^6$. (We use the same notation for the differential
forms on $T^6$ and their pullbacks to $M$.) Using the computation
of the cohomology of $N$, we get that the Betti numbers of $M$
are: $b_0(M)=b_8(M)=1$, $b_1(M)=b_7(M)=6$, $b_2(M)=b_6(M)=17$,
$b_3(M)=b_5(M)=30$, $b_4(M)=36$. In particular, $\chi(M)=0$, as
for any nilmanifold.

Consider the action of the finite group ${\mathbb{Z}}_{3}$ on
${{\mathbb{R}}^2}$ given by
 $$
  \rho(x_1,x_2)=(-x_1-x_2,x_1),
 $$
for $(x_1,x_2)\in {{\mathbb{R}}^2}$, $\rho$ being the generator of
${\mathbb{Z}}_3$. Clearly
$\rho({\mathbb{Z}}^{2})={\mathbb{Z}}^{2}$, and so $\rho$ defines
an action of ${\mathbb{Z}}_{3}$ on the $2$-torus
$T^2={\mathbb{Z}}^2\backslash {\mathbb{R}}^2$ with $3$ fixed
points: $(0,0)$, $(\frac13,\frac13)$ and $(\frac23,\frac23)$. The
quotient space $T^2/{\mathbb{Z}}_{3}$ is the orbifold $2$--sphere
$S^2$ with $3$ points of multiplicity $3$. Let $x_1$, $x_2$ denote
the natural coordinate functions on ${{\mathbb{R}}^2}$. Then the
$1$--forms $dx_1$, $dx_2$ satisfy $\rho^*(dx_1)=-dx_1-dx_2$ and
$\rho^*(dx_2)=dx_1$, hence $\rho^*(-dx_1-dx_2)=dx_2$. Thus, we can
take the $1$--forms $\alpha_1$ and $\alpha_2$ on $T^2$ such that
 \begin{equation} \label{eqn:v1}
   \rho^*(\alpha_1)=-\alpha_1-\alpha_2, \quad
   \rho^*(\alpha_2)=\alpha_1.
 \end{equation}

Define the following action of ${\mathbb{Z}}_3$ on $M$, given, at
the level of Lie groups, by $\rho \colon {{\mathbb{R}}^2} \times
{{\mathbb{R}}^6}\longrightarrow {{\mathbb{R}}^2} \times
{{\mathbb{R}}^6}$,
 $$
 \rho(x_1,x_2,y_1,y_2,z_1,z_2,v_1,v_2)
 =(-x_1-x_2,x_1,-y_1-y_2,y_1,-z_1-z_2,z_1,-v_1-v_2,v_1).
 $$
Note that $m(\rho(p'),\rho(p)) = \rho (m (p',p))$, for all
$p,p'\in G$, where $m$ is the multiplication map (\ref{eqn:m}) for
$G$. Also $\Gamma\subset G$ is stable by $\rho$ since
 $$
 v_1\equiv v_2 \pmod 3 \Longrightarrow -v_1-v_2\equiv v_1 \pmod 3.
 $$
Therefore there is a induced map $\rho \colon M \to M$, and this
covers the action $\rho: T^6 \to T^6$ on the $6$--torus
$T^{6}=T^{2}\times T^{2}\times T^{2}$ (defined as the action
$\rho$ on each of the three factors simultaneously). The action of
$\rho$ on the fiber $T^{2}= {\mathbb{Z}}\langle
(1,1),(3,0)\rangle$ has also $3$ fixed points: $(0,0)$, $(1,0)$
and $(2,0)$. Hence there are $3^4=81$ fixed points on $M$.

\begin{remark}
Under the isomorphism $M \cong (\Gamma_H\backslash
H_{{\mathbb{C}}}) \times (\Lambda \backslash {\mathbb{C}})$,
 we have that the action of $\rho$ becomes $\rho(u_1,u_2,u_3)=(\bar \zeta
 u_1,\bar \zeta u_2,\zeta u_3)$, where
 $\zeta=e^{2\pi i/3}$. Composing the isomorphism of Lemma
 \ref{lem:N} with the conjugation $(u_1,u_2,u_3)\mapsto
 (v_1,v_2,v_3)=(\bar{u}_1,\bar{u}_2,
 \bar{u}_3)$ (which is an isomorphism of Lie groups $H_{\mathbb{C}} \to H_{\mathbb{C}}$ leaving $\Gamma_H$
 invariant),
 we have that the action of $\rho$ becomes
 $\rho(v_1,v_2,v_3)=(\zeta v_1, \zeta v_2,\zeta^2 v_3)$. This is
 the action used in \cite{FM4}.
\end{remark}

We take the basis $\{\alpha_i,\beta_i,\gamma_i,\eta_i; 1\leq i
\leq 2\}$ of the $1$--forms on $M$ considered above. The
$1$--forms $dy_i$, $dz_i$, $dv_i$, $1\leq i\leq 2$, on $G$ satisfy
the following conditions similar to (\ref{eqn:v1}):
$\rho^*(dy_1)=-dy_1-dy_2$, $\rho^*(dy_2)=dy_1$,
$\rho^*(dz_1)=-dz_1-dz_2$, $\rho^*(dz_2)=dz_1$,
$\rho^*(dv_1)=-dv_1-dv_2$, $\rho^*(dv_2)=dv_1$. So
 \begin{equation} \label{eqn:v2}
 \begin{array}{ll}
  \rho^*(\alpha_1)=-\alpha_1-\alpha_2, \quad   & \rho^*(\alpha_2)=\alpha_1,\\
  \rho^*(\beta_1)=-\beta_1-\beta_2, \quad  & \rho^*(\beta_2)=\beta_1,\\
  \rho^*(\gamma_1)=-\gamma_1-\gamma_2,  \quad  &\rho^*(\gamma_2)=\gamma_1,\\
  \rho^*(\eta_1)=-\eta_1-\eta_2,  \quad &\rho^*(\eta_2)=\eta_1.
 \end{array}
 \end{equation}

\begin{remark} \label{rem:3}
If we define the $1$--forms $\alpha_3=-\alpha_1-\alpha_2$,
$\beta_3=-\beta_1-\beta_2$, $\gamma_3=-\gamma_1-\gamma_2$ and
$\eta_3=-\eta_1-\eta_2$, then we have
$\rho^*(\alpha_1)=\alpha_3,\, \rho^*(\alpha_2)=\alpha_1,\,
\rho^*(\alpha_3)=\alpha_2$, and analogously for the others.
\end{remark}

Define the quotient space
 $$
 \widehat{M}=M/{\mathbb{Z}}_{3},
 $$
and denote by $\varphi:M\to \widehat{M}$ the projection. It is an
orbifold, and it admits the structure of a symplectic orbifold
(see \cite{CMF} for a general discussion on symplectic orbifolds).

\begin{proposition}\label{prop:symplectic}
The $2$--form $\omega$ on $M$ defined by
 $$
  \omega=\alpha_1\wedge\alpha_2+\eta_2\wedge\beta_1-
  \eta_1\wedge\beta_2+\gamma_1\wedge \gamma_2
 $$
is a ${\mathbb{Z}}_{3}$-invariant symplectic form on $M$.
Therefore it induces $\widehat\omega\in
\Omega^2_{\mathrm{orb}}(\widehat{M})$, such that
$(\widehat{M},\widehat\omega)$ is a symplectic orbifold.
\end{proposition}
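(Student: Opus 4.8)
The plan is to check directly that $\omega$ is closed, non-degenerate and $\mathbb{Z}_3$-invariant, and then to let these three properties descend $\omega$ to the orbifold quotient via the framework of \cite{CMF}.

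First I would verify $d\omega=0$. As $d\alpha_i=d\beta_i=d\gamma_i=0$, only the middle block contributes and $d\omega=d\eta_2\wedge\beta_1-d\eta_1\wedge\beta_2$. Inserting the structure equations (\ref{eqn:a}) and wedging with $\beta_1$, resp. $\beta_2$, annihilates every term containing a repeated $\beta$; what remains on each side equals $\beta_1\wedge\beta_2\wedge\gamma_1-\beta_1\wedge\beta_2\wedge\gamma_2$, so the two contributions cancel. This cancellation is the one genuinely substantive point: neither $\eta_2\wedge\beta_1$ nor $\eta_1\wedge\beta_2$ is closed, and the particular combination in $\omega$ is engineered against the coefficients of (\ref{eqn:a}) precisely to make $d\omega$ vanish.

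For non-degeneracy I would split $\omega=A+B+C$ with $A=\alpha_1\wedge\alpha_2$, $B=\eta_2\wedge\beta_1-\eta_1\wedge\beta_2$ and $C=\gamma_1\wedge\gamma_2$, built from three disjoint sets of $1$-forms. Hence $A,B,C$ are mutually commuting $2$-forms with $A^2=C^2=0$ and $B^3=0$, so in $(A+B+C)^4$ only the term $A\wedge B^2\wedge C$ survives (with multinomial coefficient $12$). Since $B^2=-2\,\beta_1\wedge\beta_2\wedge\eta_1\wedge\eta_2$, one obtains
\[
\omega^4=12\,A\wedge B^2\wedge C=-24\,\alpha_1\wedge\alpha_2\wedge\beta_1\wedge\beta_2\wedge\gamma_1\wedge\gamma_2\wedge\eta_1\wedge\eta_2,
\]
a nonzero multiple of the invariant volume form, so $\omega^4$ is nowhere zero and $\omega$ is symplectic.

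For invariance I would compute $\rho^*\omega$ block by block from (\ref{eqn:v2}). The blocks $A$ and $C$ are each fixed: writing $\alpha_3=-\alpha_1-\alpha_2$ as in Remark \ref{rem:3} gives $\alpha_1\wedge\alpha_2=\alpha_3\wedge\alpha_1$, whence $\rho^*(\alpha_1\wedge\alpha_2)=\alpha_3\wedge\alpha_1=\alpha_1\wedge\alpha_2$, and likewise for the $\gamma$'s; and a one-line expansion shows that the cross terms of $\rho^*B$ recombine to $B$. Thus $\rho^*\omega=\omega$. Finally, since $\mathbb{Z}_3$ then acts on $(M,\omega)$ by symplectomorphisms, the invariant form $\omega$ descends to a closed, non-degenerate orbifold $2$-form $\widehat\omega$ on $\widehat{M}=M/\mathbb{Z}_3$ with $\varphi^*\widehat\omega=\omega$; near each of the $81$ fixed points the local model is $\mathbb{C}^4/\mathbb{Z}_3$ with a linear symplectic action, so $(\widehat{M},\widehat\omega)$ is a symplectic orbifold in the sense of \cite{CMF}. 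I expect all the real content to sit in the closedness cancellation; non-degeneracy and invariance are mechanical, and the descent is formal for a finite group acting by symplectomorphisms.
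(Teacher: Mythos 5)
Your proposal is correct and follows essentially the same route as the paper: a direct verification that $d\omega=0$ via the structure equations, that $\omega^4\neq 0$, and that $\rho^*\omega=\omega$ using (\ref{eqn:v2}), followed by descent to the quotient. The only difference is that you spell out the computation of $\omega^4$ (which the paper dismisses as clear), and your identification of which terms cancel in $d\omega$ matches the paper's calculation exactly.
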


\begin{proof}
Clearly $\omega^4\not=0$. Using (\ref{eqn:v2}) we have that
$\rho^*(\omega)=(-\alpha_1-\alpha_2)\wedge\alpha_1+
\eta_1\wedge(-\beta_1-\beta_2)+(\eta_1+\eta_2)\wedge\beta_1+
(-\gamma_1-\gamma_2)\wedge\gamma_1 =\omega$, so $\omega$ is
${\mathbb{Z}}_{3}$-invariant. Finally,
 $$
 d\omega= d\eta_2\wedge\beta_1- d\eta_1\wedge\beta_2
 =(\beta_2\wedge\gamma_1-\beta_2\wedge\gamma_2)\wedge\beta_1
 -(-\beta_1\wedge\gamma_1+\beta_1\wedge\gamma_2)\wedge\beta_2=
 0.
 $$
\end{proof}

It can be seen (cf.\ proof of Proposition 2.3 in \cite{FM4}) that
$\widehat{M}$ is simply connected. Moreover, its cohomology can be
computed using that
 $$
 H^*(\widehat{M})= H^*(M)^{{\mathbb{Z}}_3} \, .
 $$
We get
 $$
  \begin{array}{lcl}
  H^1(\widehat{M}) &=& 0, \\
  H^2(\widehat{M}) &=& \langle [\alpha_1 \wedge \alpha_2],
  [\alpha_1 \wedge \beta_2-\alpha_2 \wedge \beta_1],
    [\alpha_1 \wedge \beta_1+\alpha_1 \wedge \beta_2+\alpha_2 \wedge \beta_2],
    \\
   &&[\alpha_1 \wedge \gamma_2-\alpha_2 \wedge \gamma_1],
    [\alpha_1 \wedge \gamma_1+\alpha_1 \wedge \gamma_2+\alpha_2 \wedge \gamma_2],
   [\beta_1 \wedge \beta_2],
     [\beta_1 \wedge \gamma_2-\beta_2 \wedge \gamma_1],
  \\
 && [\beta_1 \wedge\gamma_1+\beta_1 \wedge\gamma_2+\beta_2 \wedge\gamma_2],
 [\beta_1\wedge \eta_2 - \beta_2\wedge \eta_1],
 [\beta_1\wedge \eta_1 + \beta_1\wedge \eta_2+\beta_2\wedge \eta_2],
 \\
 &&  [\gamma_1\wedge \gamma_2],[\gamma_1\wedge \eta_2 - \gamma_2\wedge \eta_1],
  [\gamma_1\wedge \eta_1 + \gamma_1\wedge \eta_2+\gamma_2\wedge \eta_2]\rangle,\\
  H^3(\widehat{M}) &=& 0.
  \end{array}
  $$

\begin{remark}
The Euler characteristic of $\widehat{M}$ can be computed via the
formula for finite group action quotients: let $\Pi$ be the cyclic
group of order $n$, acting on a space $X$ almost freely. Then
 $$
 \chi(X/\Pi)= \frac1n \chi(X) + \sum_{p} \left(1- \frac{1}{\#
 \Pi_p}\right),
 $$
where $\Pi_p\subset \Pi$ is the isotropy group of $p\in X$. In our
case $\chi(\widehat{M})= \frac13 \chi (M) + 81 (1-\frac13) =54$.
\end{remark}

Using this remark and the previous calculation, we get that
$b_1(\widehat{M})=b_7(\widehat{M})=0$, $b_2(\widehat{M}) =
b_6(\widehat{M}) ={13}$, $b_3(\widehat{M}) =b_5(\widehat{M}) =0$
and $b_4(\widehat{M}) =26$. Note that $\widehat{M}$ satisfies
Poincar{\'e} duality since $H^*(\widehat{M})= H^*(M)^{{\mathbb{Z}}_3}$
and $H^*(M)$ satisfies Poincar{\'e} duality.

\section{Non-formality of the symplectic orbifold} \label{sec:non-formal}

Formality is a property of the rational homotopy type of a space
which is of great importance in symplectic geometry. This is due
to the fact that compact K\"ahler manifolds are formal \cite{DGMS}
whilst there are compact symplectic manifolds which are non-formal
\cite{TO,BT,FM4}. A general discussion of the property of
formality can be found in \cite{TO}.

The non-formality of a space can be detected by means of Massey
products. Let us recall its definition. The simplest type of
Massey product is the triple (also known as ordinary) Massey
product. Let $X$ be a smooth manifold and let $a_i \in
H^{p_i}(X)$, $1 \leq i\leq 3$, be three cohomology classes such
that $a_1\cup a_2=0$ and $a_2\cup a_3=0$. The (triple) Massey
product of the classes $a_i$ is defined as the set
  $$
  \langle a_1,a_2,a_3 \rangle  = \{
  [ \alpha_1 \wedge \eta+(-1)^{p_1+1} \xi
  \wedge \alpha_3] \ | \ a_i=[\alpha_i],\ \alpha_1\wedge \alpha_2= d \xi,
  \ \alpha_2\wedge \alpha_3=d \eta \}
  $$
inside $H^{p_1+p_2+p_3-1}(X)$. We say that $\langle a_1,a_2,a_3
\rangle$ is trivial if $0\in \langle a_1,a_2,a_3 \rangle$.

The definition of higher Massey products is as follows
(see~\cite{Mas,TO}). The Massey product $\langle
a_1,a_2,\dots,a_t\rangle$, $a_i\in H^{p_i}(X)$, $1\leq i\leq t$,
$t\geq 3$, is defined if there are differential forms
$\alpha_{i,j}$ on $X$, with $1\leq i\leq j\leq t$, except for the
case $(i,j)=(1,t)$, such that
 \begin{equation}\label{eqn:gm}
 a_i=[\alpha_{i,i}], \qquad
 d\,\alpha_{i,j}= \sum\limits_{k=i}^{j-1} {\bar \alpha}_{i,k}\wedge
 \alpha_{k+1,j},
 \end{equation}
where $\bar \alpha=(-1)^{\deg(\alpha)} \alpha$. Then the Massey
product is
 $$
 \langle a_1,a_2,\dots,a_t \rangle =\left\{
 \left[\sum\limits_{k=1}^{t-1} {\bar \alpha}_{1,k} \wedge
 \alpha_{k+1,t}\right] \ | \ \alpha_{i,j} \hbox{ as in (\ref{eqn:gm})}\right\}
 \subset H^{p_1+ \cdots +p_t
 -(t-2)}(X)\, .
 $$
We say that the Massey product is trivial if $0\in \langle
a_1,a_2,\dots,a_t\rangle$. Note that for $\langle
a_1,a_2,\dots,a_t\rangle$ to be defined it is necessary that
$\langle a_1,\dots,a_{t-1}\rangle$ and $\langle
a_2,\dots,a_t\rangle$ are defined and trivial.

The existence of a non-trivial Massey product is an obstruction to
formality, namely, if $X$ has a non-trivial Massey product then
$X$ is non-formal.

\medskip

In the case of an orbifold, Massey products are defined
analogously but taking the forms to be \emph{orbifold forms} (see
\cite[Section 2]{CMF}).

Now we want to prove the non-formality of the orbifold
$\widehat{M}$ constructed in the previous section. By the results
of \cite{TO}, $M$ is non-formal since it is a nilmanifold which is
not a torus. We shall see that this property is inherited by the
quotient space $\widehat{M}=M/{\mathbb{Z}}_3$. For this, we study
the Massey products on $\widehat{M}$.

\begin{lemma}\label{lem:MasseyhatE}
 $\widehat{M}$ has a non-trivial Massey product  if and only if  $M$
 has a non-trivial Massey product with all cohomology classes
 $a_i\in H^*(M)$ being ${\mathbb{Z}}_3$-invariant cohomology classes.
\end{lemma}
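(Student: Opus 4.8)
The plan is to establish a dictionary between Massey products on $\widehat{M}$, computed with orbifold forms, and $\mathbb{Z}_3$-invariant Massey products on $M$, using the fact that the pullback $\varphi^*$ identifies the complex of orbifold forms on $\widehat{M}$ with the subcomplex $\Omega^*(M)^{\mathbb{Z}_3}$ of $\mathbb{Z}_3$-invariant forms on $M$. The key observation is that $\varphi^*\colon \Omega^*_{\mathrm{orb}}(\widehat{M}) \to \Omega^*(M)^{\mathbb{Z}_3}$ is an isomorphism of differential graded algebras, and on cohomology it realizes the already-stated identity $H^*(\widehat{M}) = H^*(M)^{\mathbb{Z}_3}$. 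Under this isomorphism every defining datum for a Massey product transports faithfully, since the defining equations~(\ref{eqn:gm}) involve only the wedge product and the differential, both of which $\varphi^*$ respects.

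For the forward direction, I would start with a nontrivial Massey product $\langle a_1,\dots,a_t\rangle$ on $\widehat{M}$, defined by orbifold forms $\alpha_{i,j}$. Applying $\varphi^*$ yields forms $\varphi^*\alpha_{i,j}$ on $M$ that are $\mathbb{Z}_3$-invariant, satisfy the same system~(\ref{eqn:gm}), and have $\mathbb{Z}_3$-invariant cohomology classes $\varphi^* a_i$. This exhibits a Massey product $\langle \varphi^* a_1,\dots,\varphi^* a_t\rangle$ on $M$ with invariant classes; its nontriviality follows because $\varphi^*$ is injective on cohomology, so if the $M$-side product contained $0$ we could invert $\varphi^*$ to conclude $0 \in \langle a_1,\dots,a_t\rangle$, a contradiction.

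The reverse direction is the subtler one and is where I expect the main obstacle. Given a nontrivial Massey product on $M$ whose input classes $a_i$ are $\mathbb{Z}_3$-invariant, I must produce \emph{invariant} defining forms $\alpha_{i,j}$, since only invariant forms descend to orbifold forms on $\widehat{M}$. The defining forms chosen a priori need not be invariant, so the essential step is an averaging argument: replace each $\alpha_{i,j}$ by its group average $\tfrac13\sum_{g\in\mathbb{Z}_3} g^*\alpha_{i,j}$. The delicate point is that averaging must simultaneously preserve the whole coupled system~(\ref{eqn:gm}) and not alter the resulting cohomology class modulo the indeterminacy. Since $\rho^*$ commutes with $d$ and $\wedge$ and fixes the invariant classes $a_i = [\alpha_{i,i}]$, applying $\tfrac13\sum_g g^*$ to each equation in~(\ref{eqn:gm}) yields a valid invariant system, and the averaged total form $\sum_k \overline{\alpha}_{1,k}\wedge\alpha_{k+1,t}$ represents a class that lies in the (invariant) indeterminacy coset determined by the averaged lower products. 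Because the Massey product on $M$ is nontrivial, $0$ is not in the full coset, hence not in the invariant subcoset either; the averaged data then descend via $(\varphi^*)^{-1}$ to a nontrivial orbifold Massey product on $\widehat{M}$. The care needed is to verify that averaging does not accidentally land the representative in the trivial coset—this is handled by noting that the indeterminacy subgroup is itself $\mathbb{Z}_3$-invariant, so averaging maps the nontrivial coset to a nontrivial invariant coset.
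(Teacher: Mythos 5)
Your overall framework---that $\varphi^*$ identifies orbifold forms on $\widehat{M}$ with $\mathbb{Z}_3$-invariant forms on $M$ as differential graded algebras---is the same as the paper's, but you have placed the averaging argument on the wrong side of the equivalence, and as a result the forward implication has a genuine gap. In the forward direction you pull back a defining system from $\widehat{M}$ to $M$ and then assert that if the resulting Massey product on $M$ contained $0$ you could ``invert $\varphi^*$'' to conclude $0\in\langle a_1,\dots,a_t\rangle$ on $\widehat{M}$. But $0\in\langle\varphi^*a_1,\dots,\varphi^*a_t\rangle$ on $M$ is witnessed by \emph{some} defining system on $M$, which need not be $\mathbb{Z}_3$-invariant and hence need not lie in the image of $\varphi^*$; injectivity of $\varphi^*$ on cohomology does not let you transport that witness back to $\widehat{M}$. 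This is precisely where the paper averages: the classes $\varphi^*a_i$ already have invariant representatives, so one replaces the non-invariant auxiliary forms $\xi$, $\eta$, $f$ witnessing triviality on $M$ by their $\mathbb{Z}_3$-averages, checks that the defining equations persist, and descends the averaged data to $\widehat{M}$ to contradict non-triviality there.

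Conversely, the reverse direction is where you deploy the averaging, but there it is both unnecessary and, as stated, unsound for higher products. The paper's argument is simpler: choose invariant representatives of the classes $a_i$, observe the product is defined on $\widehat{M}$, and note that any defining system on $\widehat{M}$ exhibiting triviality would pull back under $\varphi^*$ to a defining system on $M$ exhibiting $0\in\langle a_1,\dots,a_t\rangle$, a contradiction. Your proposed average $\tfrac13\sum_g g^*\alpha_{i,j}$ of a full defining system does not in general satisfy the system (\ref{eqn:gm}) once $t\geq 5$, because equations such as $d\alpha_{1,4}=\bar\alpha_{1,1}\wedge\alpha_{2,4}+\bar\alpha_{1,2}\wedge\alpha_{3,4}+\bar\alpha_{1,3}\wedge\alpha_{4,4}$ are quadratic in the off-diagonal entries, and the average of a product is not the product of the averages. (The paper writes out only the triple case, where every term contains an invariant diagonal factor and the averaging is therefore linear in the unknowns, and declares the general case similar.) Your closing remark that the particular averaged representative is non-zero also falls short of non-triviality of the Massey product on $\widehat{M}$, which requires that \emph{no} orbifold defining system produce $0$; the clean way to get this is again the pullback of arbitrary $\widehat{M}$-defining systems to $M$.
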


\begin{proof}
We shall do the case of triple Massey products, since the general
case is similar. Suppose that $\langle a_1,a_2,a_3\rangle$,  $a_i
\in H^{p_i}(\widehat{M})$, $1 \leq i\leq 3$ is a non-trivial
Massey product on $\widehat{M}$. Let $a_i=[\alpha_i]$, where
$\alpha_i\in \Omega^*_{\mathrm{orb}}(\widehat{M})$. We pull-back
the cohomology classes $\alpha_i$ via
$\varphi^*:\Omega^*_{\mathrm{orb}}(\widehat{M})\to \Omega^*(M)$ to
get a Massey product $\langle
[\varphi^*\alpha_1],[\varphi^*\alpha_2],[\varphi^*\alpha_3]\rangle$.
Suppose that this is trivial on $M$, then $\varphi^*\alpha_1\wedge
\varphi^*\alpha_2= d \xi$, $\varphi^*\alpha_2\wedge
\varphi^*\alpha_3=d \eta$, with $\xi,\eta\in \Omega^*(M)$, and
$\varphi^*\alpha_1 \wedge \eta+(-1)^{p_1+1} \xi \wedge
\varphi^*\alpha_3= df$. Then $\tilde\eta=
(\eta+\rho^*\eta+(\rho^*)^2\eta)/3$, $\tilde\xi=
(\xi+\rho^*\xi+(\rho^*)^2\xi)/3$ and $\tilde{f}= (f+
\rho^*\eta+(\rho^*)^2\eta)/3$ are ${\mathbb{Z}}_3$-invariant and
$\varphi^*\alpha_1 \wedge \tilde\eta+(-1)^{p_1+1} \tilde\xi \wedge
\varphi^*\alpha_3= d\tilde{f}$. Writing
$\tilde\eta=\phi^*\hat\eta$, $\tilde\xi=\phi^*\hat\xi$, $\tilde
f=\phi^*\hat f$, for $\hat\eta,\hat\xi,\hat f\in
\Omega^*_{\mathrm{orb}}(\widehat M)$, we get $\alpha_1 \wedge \hat
\eta+(-1)^{p_1+1} \hat\xi \wedge \alpha_3= d\hat f$, contradicting
that $\langle a_1,a_2,a_3\rangle$ is non-trivial.

Conversely, suppose that $\langle a_1,a_2,a_3\rangle$, $a_i \in
H^{p_i}(M)^{{\mathbb{Z}}_3}$, $1 \leq i\leq 3$, is a non-trivial
Massey product on $M$. Then we can represent $a_i=[\alpha_i]$ by
${\mathbb{Z}}_3$-invariant differential forms $\alpha_i\in
\Omega^{p_i}(M)$. Let $\hat\alpha_i$ be the induced form on
$\widehat{M}$. Then $\langle
[\hat\alpha_1],[\hat\alpha_2],[\hat\alpha_3]\rangle$ is a
non-trivial Massey product on $\widehat{M}$. For if it were
trivial then pulling-back by $\varphi$, we would get $0\in \langle
\varphi^*[\hat\alpha_1], \varphi^*[\hat\alpha_2],
\varphi^*[\hat\alpha_3]\rangle=\langle a_1,a_2,a_3\rangle$.
\end{proof}

In our case, all the triple and quintuple Massey products on
$\widehat{M}$ are trivial. For instance, for a Massey product of
the form $\langle a_1,a_2,a_3\rangle$, all $a_i$ should have even
degree, since
$H^1(\widehat{M})=H^3(\widehat{M})=H^5(\widehat{M})=H^7(\widehat{M})=0$.
Therefore the degree of the cohomology classes in $\langle
a_1,a_2,a_3\rangle$ is odd, hence they are zero.

Since the dimension of $\widehat{M}$ is $8$, there is no room for
sextuple Massey products or higher, since the degree of $\langle
a_1,a_2,\ldots, a_s\rangle$ is at least $s+2$, as $\deg a_i\geq
2$. For $s=6$, a sextuple Massey product of cohomology classes of
degree $2$ would live in the top degree cohomology. For computing
an element of $\langle a_1,\ldots, a_6\rangle$, we have to choose
$\alpha_{i,j}$ in (\ref{eqn:gm}). But then adding a closed form
$\phi$ with $a_1 \cup [\phi]=\lambda [\widehat{M}]\in
H^8(\widehat{M})$ to $\alpha_{2,6}$ we can get another element of
$\langle a_1,\ldots, a_6\rangle$ which is the previous one plus
$\lambda [\widehat{M}]$. For suitable $\lambda$ the we get $0\in
\langle a_1,\ldots, a_6\rangle$.

The only possibility for checking the non-formality of
$\widehat{M}$ via Massey products is to get a non-trivial
quadruple Massey product.

{}From now on, we will denote by the same symbol a
${\mathbb{Z}}_3$-invariant form on $M$ and that induced on
$\widehat{M}$. Notice that the $2$ forms $\gamma_1\wedge\gamma_2$,
$\beta_1\wedge\beta_2$ and
$\alpha_1\wedge\gamma_1+\alpha_2\wedge\gamma_1+\alpha_2\wedge\gamma_2$
are ${\mathbb{Z}}_3$-invariant forms on $M$, hence they descend to
the quotient $\widehat{M}=M/{\mathbb{Z}}_3$. We have the
following:

\begin{proposition}\label{prop:nonformal1}
 The quadruple Massey product
 $$
 \langle [\gamma_1\wedge\gamma_2],[\beta_1\wedge\beta_2],
 [\beta_1\wedge\beta_2],
 [\alpha_1\wedge\gamma_1+\alpha_2\wedge\gamma_1+\alpha_2\wedge\gamma_2]\rangle
 $$
 is non-trivial on $\widehat{M}$.
 Therefore, the space $\widehat{M}$ is non-formal.
\end{proposition}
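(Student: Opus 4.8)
The plan is to compute this quadruple Massey product entirely within the orbifold forms on $\widehat M$, which by Lemma \ref{lem:MasseyhatE} amounts to working with the $\mathbb Z_3$-invariant left-invariant forms on $M$, i.e. inside the finite-dimensional invariant subalgebra $\big(\Lambda\langle\alpha_i,\beta_i,\gamma_i,\eta_i\rangle\big)^{\mathbb Z_3}$ with the Chevalley--Eilenberg differential. First I would verify that the product is defined. Writing $a_1=[\gamma_1\wedge\gamma_2]$, $a_2=a_3=[\beta_1\wedge\beta_2]$ and $a_4=[\alpha_1\wedge\gamma_1+\alpha_2\wedge\gamma_1+\alpha_2\wedge\gamma_2]$, one has $a_2\cup a_3=[\beta_1\wedge\beta_2]^2=0$ at the form level, while $a_1\cup a_2$ and $a_3\cup a_4$ vanish in cohomology because $\beta_1\wedge\beta_2\wedge\gamma_1\wedge\gamma_2$ and $\beta_1\wedge\beta_2\wedge\gamma_i$ are exact: from (\ref{eqn:a}) one computes $d(\beta_1\wedge\gamma_1\wedge\eta_1)=-2\,\beta_1\wedge\beta_2\wedge\gamma_1\wedge\gamma_2$ and finds explicit primitives $\xi_1,\xi_2$ of $\beta_1\wedge\beta_2\wedge\gamma_1$ and $\beta_1\wedge\beta_2\wedge\gamma_2$ as combinations of the $\beta_i\wedge\eta_j$. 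All these primitives can be taken invariant by averaging over $\mathbb Z_3$, since the forms being trivialized are already invariant.

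Next I would build an explicit invariant defining system. The two triple subproducts vanish on the nose, so one may take $\alpha_{2,3}=\alpha_{1,3}=\alpha_{2,4}=0$, choose $\alpha_{1,2}$ with $d\alpha_{1,2}=\beta_1\wedge\beta_2\wedge\gamma_1\wedge\gamma_2$ (the invariant average of $-\frac12\,\beta_1\wedge\gamma_1\wedge\eta_1$) and $\alpha_{3,4}$ with $d\alpha_{3,4}=\beta_1\wedge\beta_2\wedge(\alpha_1\wedge\gamma_1+\alpha_2\wedge\gamma_1+\alpha_2\wedge\gamma_2)$ (the invariant average of $-\alpha_1\wedge\xi_1-\alpha_2\wedge\xi_1-\alpha_2\wedge\xi_2$). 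With these choices the Massey representative collapses to the single term $[\,\bar\alpha_{1,2}\wedge\alpha_{3,4}\,]\in H^6(\widehat M)$, and a direct expansion shows it has nonzero image in the summand $H^1(T^2)\otimes H^5(N)$, supported on the classes $\alpha_i\wedge\beta_1\wedge\beta_2\wedge\gamma_1\wedge\eta_1\wedge\eta_2$. Hence the representative $[R]$ is a nonzero cohomology class.

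The hard part is the indeterminacy: I must show that no admissible modification of the defining system cancels $[R]$. The indeterminacy is generated by $a_1\cup H^4(\widehat M)+H^4(\widehat M)\cup a_4$ together with the changes coming from replacing $\alpha_{1,2},\alpha_{3,4}$ by themselves plus invariant closed forms. One checks that $a_1\cup H^4$ contributes nothing in the relevant summand, because $[\gamma_1\wedge\gamma_2]\cup H^3(N)=0$ in $H^5(N)$ (the monomials $\beta_1\wedge\beta_2\wedge\gamma_1\wedge\gamma_2\wedge\eta_i$ are exact), and that $H^4\cup a_4$ reaches it only through $[\beta_1\wedge\beta_2\wedge\eta_1\wedge\eta_2]\cup a_4$, which produces equal coefficients on the two $\alpha_i$-terms plus an extra term along $\beta_1\wedge\beta_2\wedge\gamma_2\wedge\eta_1\wedge\eta_2$, and so cannot match $[R]$. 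The genuinely delicate point — and the real obstacle — is the auxiliary freedom in $\alpha_{3,4}$: using \emph{all} forms on $M$ one can kill $[R]$ by adding $\alpha_1\wedge\sigma$ with $\sigma=\beta_1\wedge\eta_1+\beta_1\wedge\eta_2+\beta_2\wedge\eta_2$ closed, but this form is \emph{not} $\mathbb Z_3$-invariant and its invariant average vanishes, so it is unavailable on the orbifold. I would therefore finish by verifying that the invariant modifications genuinely miss the directions occupied by $[R]$; granting this, $[R]\neq0$ modulo indeterminacy, the quadruple product is non-trivial, and $\widehat M$ is non-formal.
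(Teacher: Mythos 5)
Your setup (working in the $\mathbb{Z}_3$-invariant Chevalley--Eilenberg complex via Lemma \ref{lem:MasseyhatE}, exhibiting explicit invariant primitives, and reducing to a representative $[R]$) is sound as far as it goes, but the proof has a genuine gap exactly where you flag it: the indeterminacy analysis is never carried out, and the one concrete reason you offer for why it should succeed is false. The form $\sigma=\beta_1\wedge\eta_1+\beta_1\wedge\eta_2+\beta_2\wedge\eta_2$ \emph{is} $\mathbb{Z}_3$-invariant --- a short computation with (\ref{eqn:v2}) gives $\rho^*(\beta_1\wedge\eta_1)+\rho^*(\beta_1\wedge\eta_2)+\rho^*(\beta_2\wedge\eta_2)=\sigma$, and indeed $[\beta_1\wedge\eta_1+\beta_1\wedge\eta_2+\beta_2\wedge\eta_2]$ is listed among the generators of $H^2(\widehat M)=H^2(M)^{\mathbb{Z}_3}$. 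So it is available on the orbifold, and your proposed mechanism (``the killing form dies in the quotient'') cannot be the reason the product survives. More fundamentally, Lemma \ref{lem:MasseyhatE} itself shows that if the product could be killed on $M$ using \emph{arbitrary} forms, then averaging over $\mathbb{Z}_3$ would kill it on $\widehat M$ as well; so your premise that ``using all forms on $M$ one can kill $[R]$'' would, if true, sink the whole argument. The product is in fact non-trivial already on $M$ (as it must be, $M$ being a non-toral nilmanifold), and the role of the quotient is only to achieve simple-connectivity.

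The paper closes the gap by a different and choice-independent device. It writes out the full system of equations that a trivializing defining system $\{f_i,g_j\}$ would have to satisfy, observes that $H^3(\widehat M)=0$ forces $f_i=df_i'$, and then cups the putative value $\Psi$ with the fixed class $[2\alpha_1\wedge\gamma_2-\alpha_2\wedge\gamma_1+\alpha_1\wedge\gamma_1+\alpha_2\wedge\gamma_2]$. This class is chosen so that its representative annihilates both $\gamma_1\wedge\gamma_2$ and $\alpha_1\wedge\gamma_1+\alpha_2\wedge\gamma_1+\alpha_2\wedge\gamma_2$ at the level of forms; consequently the $g_1$, $g_2$ terms and all correction terms coming from the $f_i'$ drop out up to exact forms, and the cup product equals $-\frac13$ times the volume class for \emph{every} admissible choice. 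This disposes of the entire (and, for quadruple products, rather intricate) indeterminacy in one stroke. To complete your argument you would either need to adopt this pairing trick or genuinely enumerate all invariant modifications of the defining system --- including changes to $\alpha_{1,3}$ and $\alpha_{2,4}$, not only $a_1\cup H^4+H^4\cup a_4$ --- and verify none reaches $[R]$; as written, that verification is missing and the heuristic replacing it is incorrect.
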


\begin{proof}
First we see that
 \begin{eqnarray*}
(\gamma_1\wedge\gamma_2)\wedge (\beta_1\wedge\beta_2)&=&d\xi, \\
(\beta_1\wedge\beta_2)\wedge
(\alpha_1\wedge\gamma_1+\alpha_2\wedge\gamma_1+\alpha_2\wedge\gamma_2)&=&
d\varsigma,
 \end{eqnarray*}
 where $\xi$ and $\varsigma$ are the differential
$3$--forms on $\widehat{M}$ given by
  \begin{eqnarray*}
  \xi &=& - \frac{1}{6} \left( \gamma_1\wedge (\beta_1\wedge\eta_2+
   \beta_2\wedge\eta_2+\beta_2\wedge\eta_1 ) +\gamma_2\wedge(
   \beta_1\wedge\eta_2 + \beta_1\wedge\eta_1+ \beta_2\wedge\eta_1)\right), \\
  \varsigma &=&
  \frac{1}{3}\left(- \alpha_1\wedge(\eta_2\wedge\beta_1+
  \eta_1\wedge\beta_1+\eta_1\wedge\beta_2)+
  \alpha_2\wedge(\eta_2\wedge\beta_2-\eta_1\wedge\beta_1)\right).
 \end{eqnarray*}
Therefore, the triple Massey products $\langle
[\gamma_1\wedge\gamma_2],[\beta_1\wedge\beta_2],
[\beta_1\wedge\beta_2]\rangle$ and $\langle
[\beta_1\wedge\beta_2], [\beta_1\wedge\beta_2],
[\alpha_1\wedge\gamma_1+\alpha_2\wedge\gamma_1+\alpha_2\wedge\gamma_2]\rangle$
are defined, and they are trivial because all the (triple) Massey
products on $\widehat{M}$ are trivial. (Notice that the forms
$\xi$ and $\varsigma$ are ${\mathbb{Z}}_3$-invariant on $M$ and so
descend to $\widehat{M}$.) Therefore,  the quadruple Massey
product $\langle [\gamma_1\wedge\gamma_2],[\beta_1\wedge\beta_2],
[\beta_1\wedge\beta_2],
[\alpha_1\wedge\gamma_1+\alpha_2\wedge\gamma_1+\alpha_2\wedge\gamma_2]\rangle$
is defined on $\widehat{M}$. Moreover, it is trivial on
$\widehat{M}$ if and only if there are differential forms $f_i \in
\Omega^3(\widehat{M})$, $1\leq i\leq 3$, and $g_j \in
\Omega^4(\widehat{M})$, $1\leq j\leq 2$, such that
  \begin{eqnarray*}
  && (\gamma_1\wedge\gamma_2)\wedge (\beta_1\wedge\beta_2)=d(\xi+f_1), \\
  && (\beta_1\wedge\beta_2)\wedge (\beta_1\wedge\beta_2)=d f_2, \\
  && (\beta_1\wedge\beta_2)\wedge
   (\alpha_1\wedge\gamma_1+\alpha_2\wedge\gamma_1+\alpha_2\wedge\gamma_2)=
   d (\varsigma+f_3),  \\
  && (\gamma_1\wedge\gamma_2) \wedge f_2-(\xi+f_1)\wedge
   (\beta_1\wedge\beta_2)=d g_1,  \\
  && (\beta_1\wedge\beta_2)\wedge (\varsigma +f_3)-
   f_2\wedge (\alpha_1\wedge\gamma_1+
   \alpha_2\wedge\gamma_1+\alpha_2\wedge\gamma_2)=d g_2,
  \end{eqnarray*}
and the $6$--form given by
 $$
 \Psi=-(\gamma_1\wedge\gamma_2)\wedge g_2 - g_1\wedge
  (\alpha_1\wedge\gamma_1+\alpha_2\wedge\gamma_1+\alpha_2\wedge\gamma_2)
  +(\xi+f_1)\wedge(\varsigma +f_3)
 $$
defines the zero class in $H^6(\widehat{M})$. Clearly $f_1$, $f_2$
and $f_3$ are closed $3$--forms. Since $H^3(\widehat{M})=0$, we
can write $f_1=df'_1$, $f_2=df'_2$ and $f_3=df'_3$  for some
differential $2$--forms $f'_1$, $f'_2$  and $f'_3\in
\Omega^2(\widehat{M})$. Now, multiplying $[\Psi]$ by the
cohomology class $[\sigma]\in H^2(\widehat{M})$, where
$\sigma=2\alpha_1\wedge \gamma_2 -\alpha_2\wedge\gamma_1
+\alpha_1\wedge\gamma_1+\alpha_2\wedge\gamma_2$ we get
 $$
   \sigma \wedge \Psi
 =-\frac{1}{3}(\alpha_1\wedge\alpha_2\wedge\beta_1\wedge\beta_2
  \wedge\gamma_1\wedge\gamma_2\wedge\eta_1\wedge\eta_2)
   + d( \sigma\wedge\xi\wedge f_3' + \sigma \wedge \varsigma \wedge
 f_1'+\sigma \wedge  f_1' \wedge df_3').
 $$
Hence, $[2\alpha_1\wedge \gamma_2 -\alpha_2\wedge\gamma_1
+\alpha_1\wedge\gamma_1+\alpha_2\wedge\gamma_2]\cup [\Psi] \neq
0$, which implies that $[\Psi]$ is non-zero in $H^6(\widehat{M})$.
This proves that the Massey product $\langle
[\gamma_1\wedge\gamma_2],[\beta_1\wedge\beta_2],
[\beta_1\wedge\beta_2],
[\alpha_1\wedge\gamma_1+\alpha_2\wedge\gamma_1+\alpha_2\wedge\gamma_2]\rangle$
is non-trivial, and so $\widehat{M}$ is non-formal.
\end{proof}


Finally, 
there is a way to desingularize $(\widehat{M},\widehat{\omega})$
to get a smooth symplectic 
manifold.

\begin{theorem}
 There is a smooth compact
 symplectic $8$-manifold $(\widetilde{M},\widetilde{\omega})$
 which is simply-connected and non-formal.
\end{theorem}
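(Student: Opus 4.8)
The plan is to invoke the symplectic resolution of isolated orbifold singularities of \cite{CMF}, and then to show that the resulting smooth manifold inherits both simple-connectivity and non-formality from $\widehat M$. First I would apply the procedure of \cite{CMF} to the $81$ isolated singular points of $\widehat M$. Each such point has a neighbourhood modelled on a neighbourhood of the cone point in $\mathbb C^4/\mathbb Z_3$, with $\mathbb Z_3$ acting linearly and symplectically with only the origin as fixed point; replacing each of these by the local symplectic model of \cite{CMF} produces a smooth compact symplectic manifold $(\widetilde M,\widetilde\omega)$ together with a resolution map $\pi\colon\widetilde M\to\widehat M$ that is a symplectomorphism away from the exceptional set $E=\pi^{-1}(\mathrm{Sing}\,\widehat M)$. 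This is exactly the output guaranteed by \cite{CMF}, so the existence of the smooth compact symplectic manifold is immediate.

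Next I would verify that $\widetilde M$ is simply connected by van Kampen's theorem. Writing $\widetilde M=A\cup B$, where $A\cong\widehat M\setminus\mathrm{Sing}\,\widehat M$ and $B$ is the disjoint union of the local resolutions $\widetilde U_i$, the overlap $A\cap B$ is homotopy equivalent to the disjoint union of the links $L=S^7/\mathbb Z_3$, each with $\pi_1(L)=\mathbb Z_3$. Since $\widehat M$ is simply connected and its underlying space is $A$ with cones on the $L$ glued in, $\pi_1(A)$ is normally generated by the images of these groups $\mathbb Z_3$. As each local resolution $\widetilde U_i$ is simply connected, the composite $\pi_1(L)\to\pi_1(\widetilde U_i)$ is trivial, so van Kampen kills precisely these generators and gives $\pi_1(\widetilde M)=1$.

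The heart of the argument, and the step I expect to be the main obstacle, is to transfer the non-trivial quadruple Massey product of Proposition \ref{prop:nonformal1} across the resolution. Denote by $a_1,\dots,a_4$ the four classes and by $\sigma,\Psi$ the forms appearing there. The idea is that $\pi^*\colon H^*(\widehat M)\to H^*(\widetilde M)$ is a ring homomorphism, is injective, and, being induced by a degree-one map, restricts to an isomorphism on $H^8$ that preserves integration. Pulling back the entire defining system $(\xi,\varsigma,f_i,g_j)$ of Proposition \ref{prop:nonformal1} shows that $\langle\pi^*a_1,\pi^*a_2,\pi^*a_3,\pi^*a_4\rangle$ is defined on $\widetilde M$ and contains $\pi^*[\Psi]$. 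To see that it does not contain $0$ I would pair with $\pi^*[\sigma]$, exactly as in the proof of Proposition \ref{prop:nonformal1}: one checks directly that $\sigma\wedge(\gamma_1\wedge\gamma_2)=0$ and $\sigma\wedge(\alpha_1\wedge\gamma_1+\alpha_2\wedge\gamma_1+\alpha_2\wedge\gamma_2)=0$, i.e. $\sigma\cup a_1=0$ and $\sigma\cup a_4=0$.

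The delicate point is that the indeterminacy of the Massey product on $\widetilde M$ is a priori larger than on $\widehat M$. Provided the resolution is chosen so that $H^3(\widetilde M)=H^3(\widehat M)=0$ — which holds because the exceptional set contributes only even-degree cohomology — a degree count shows that changes in the defining system supported in degree $3$ drop out and the indeterminacy is contained in $a_1\cup H^4(\widetilde M)+H^4(\widetilde M)\cup a_4$. The two vanishing identities above then give $\pi^*[\sigma]\cup\bigl(a_1\cup H^4(\widetilde M)+H^4(\widetilde M)\cup a_4\bigr)=0$, so $\pi^*[\sigma]$ annihilates the whole indeterminacy. Since $\pi^*[\sigma]\cup\pi^*[\Psi]=\pi^*([\sigma]\cup[\Psi])\neq 0$ by preservation of the top pairing, cupping the entire Massey product with $\pi^*[\sigma]$ yields a set not containing $0$; hence $0\notin\langle\pi^*a_1,\pi^*a_2,\pi^*a_3,\pi^*a_4\rangle$, the product is non-trivial, and $\widetilde M$ is non-formal. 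The main obstacle is thus the control of the cohomology of the resolution — establishing $H^3(\widetilde M)=0$ together with the injectivity and top-degree compatibility of $\pi^*$ — since these are precisely what bound the Massey-product indeterminacy and make the $\sigma$-pairing argument go through.
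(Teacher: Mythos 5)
Your overall strategy coincides with the paper's: resolve the $81$ isolated singular points by the symplectic resolution of \cite{CMF}, deduce simple-connectivity of $\widetilde M$ from that of $\widehat M$ (the paper simply cites \cite[Proposition 2.3]{FM4}, which is in essence your van Kampen argument), and carry the non-trivial quadruple Massey product of Proposition \ref{prop:nonformal1} over to $\widetilde M$. Your control of the indeterminacy --- $H^3(\widetilde M)=0$ because the exceptional sets contribute only even-degree cohomology, together with $[\sigma]\cup a_1=[\sigma]\cup a_4=0$ and the compatibility of $\pi^*$ with the top-degree pairing --- is correct and in fact makes explicit points the paper leaves implicit.

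The genuine gap is in the sentence ``Pulling back the entire defining system $(\xi,\varsigma,f_i,g_j)$ \dots\ shows that the product is defined on $\widetilde M$.'' The resolution map $\pi$ is a diffeomorphism only away from the exceptional set $E$, so the pullback of an orbifold form on $\widehat M$ is a priori a smooth form only on $\widetilde M\setminus E$; the forms $\gamma_1\wedge\gamma_2$, $\beta_1\wedge\beta_2$, $\xi$, $\varsigma$, $\sigma$ do not vanish at the singular points, and there is no reason their pullbacks should extend smoothly across $E$ (this depends on the local model of the resolution and fails for naive pullback in general). Since a Massey product is a cochain-level construction, you cannot get away with the cohomological map $\pi^*$ alone. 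The paper fills exactly this hole with a cut-off device: it chooses a $\mathbb{Z}_3$-equivariant map $\psi\colon M\to M$ which is the identity outside small balls around the fixed points and contracts smaller balls onto them, and replaces every form $\vartheta$ of the defining system by $\psi^*\vartheta$. These are cohomologous to the original forms but vanish identically near the fixed points, hence extend by zero over the exceptional divisors to give honest smooth forms on $\widetilde M$, after which the computation of Proposition \ref{prop:nonformal1} runs verbatim. Without this step (or an argument that the specific resolution of \cite{CMF} admits smooth extensions of the pulled-back defining system), your transfer of the Massey product does not close.
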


\begin{proof}
By \cite[Theorem 3.3]{CMF}, there is a symplectic resolution
$\pi:(\widetilde{M},\widetilde{\omega})\to
(\widehat{M},\widehat{\omega})$, which consists of a smooth
symplectic manifold $(\widetilde{M},\widetilde{\omega})$ and a map
$\pi$ which is a diffeomorphism outside the singular points.

To prove the non-formality of $\widetilde{M}$, we work as follows.
All the forms of the proof of Proposition \ref{prop:nonformal1}
can be defined on the resolution $\widetilde{M}$. Take a
${\mathbb{Z}}_3$-equivariant map $\psi:M\to M$ which is the
identity outside small balls around the fixed points, and
contracts smaller balls onto the fixed points. Substitute the
forms $\vartheta$, $\tau_i$, $\kappa$, $\xi$, \ldots\ by
$\psi^*\vartheta$, $\psi^*\tau_i$, $\psi^*\kappa$, $\psi^*\xi$,
\ldots\ Then the corresponding elements in the quadruple Massey
product are non-zero, but these forms are zero in a neighbourhood
of the fixed points. Therefore they define forms on
$\widetilde{M}$, by extending them by zero along the exceptional
divisors $E_p=\pi^{-1}(p)$ ($p\in\widehat{M}$ singular point). Now
the proof of Proposition \ref{prop:nonformal1} works for
$\widetilde{M}$ with these forms.

Finally, the manifold $\widetilde{M}$ is simply connected as it is
proved in \cite[Proposition 2.3]{FM4} (basically, this follows
from the simply-connectivity of $\widehat{M}$).
\end{proof}

\begin{theacknowledgments}
First author partially supported by EPSRC (UK) grant. Second
author partially supported by grant MCyT (Spain)
MTM2005-08757-C04-02. Third author partially supported by grant
MCyT (Spain) MTM2004-07090-C03-01.
\end{theacknowledgments}


\begin{thebibliography}{33}

\bibitem{ref1} I.K.~Babenko, and I.A.~Taimanov, On non-formal simply connected
symplectic manifolds, \emph{Siberian Math. J.} \textbf{41},
204--217 (2000).

\bibitem{ref2} I.K.~Babenko, and I.A.~Taimanov, Massey products in
symplectic manifolds, \emph{Sb. Math.} \textbf{191}, 1107--1146
(2000).

\bibitem{BT} I.K.~Babenko, and I.A.~Taimanov, On existence of non-formal simply connected
symplectic manifolds, \emph{Russian Math. Surveys} \textbf{53},
1082--1083 (1998).

\bibitem{CMF} G.~Cavalcanti, M.~Fern\'andez, and V.~Mu\~noz,
Symplectic resolutions, Lefschetz property and formality,
\emph{Advances in Math.} To appear.

\bibitem{DGMS}
P.~Deligne, P.~Griffiths, J.~Morgan, and D.~Sullivan,
Real homotopy theory of K\"ahler manifolds,
\emph{Invent. Math.} \textbf{29}, 245--274 (1975).

\bibitem{FM4}
M.~Fern\'{a}ndez, and V.~Mu\~{n}oz, An $8$-dimensional non-formal
simply connected symplectic manifold, \emph{Annals of Math. (2)}.
To appear.

\bibitem{Malc}
A.I.~Mal'cev, A class of homogeneous spaces, \emph{Amer. Math.
Soc. Transl.} \textbf{39} (1951).

\bibitem{Mas}
W.S.~Massey, Some higher order cohomology operations, \emph{Int.
Symp. Alg. Top. Mexico}, 145--154 (1958).

\bibitem{No}
K.~Nomizu, On the cohomology of compact homogeneous spaces of
nilpotent Lie groups, \emph{Annals of Math. (2)} \textbf{59},
531--538 (1954).

\bibitem{ref3} Y.~Rudyak, and A.~Tralle, Thom spaces, Massey products and
nonformal symplectic manifolds, \emph{Internat. Math. Res.
Notices} \textbf{10}, 495--513 (2000).

\bibitem{TO} A.~Tralle, and J.~Oprea, \emph{Symplectic manifolds with no K\"ahler
structure}, Lecture Notes in Math. \textbf{1661},
Springer--Verlag, 1997.


\end{thebibliography}
\end{document}